 \newcommand{\n}{\mathfrak{n} }
 \newcommand{\m}{\mathfrak{m} }
    \newcommand{\kernel}{\operatorname{Ker}}
 \newcommand{\gr }{\operatorname{gr}}
 \newcommand{\Tor}{\operatorname{Tor}}
 \theoremstyle{plain}
 \newtheorem{thm}{Theorem}[section]
 \newtheorem{prop}[thm]{Proposition}
 \newtheorem{lem}[thm]{Lemma}
 \newtheorem{cor}[thm]{Corollary}
\theoremstyle{definition}
 \newtheorem{exam}[thm]{Example}
 \newtheorem{ques}[thm]{Question}
\theoremstyle{remark}
 \newtheorem{rem}[thm]{Remark}
\begin{document}

\title[ ] { The Golod property for powers of ideals  and  Koszul ideals }

\begin{abstract}
Let $S$ be a regular local ring or a polynomial ring over  a field
and $I$ be an ideal of $S$. Motivated by a recent result of Herzog
and Huneke, we study the natural question of whether $I^m$ is a
Golod ideal for all $m\geq 2$. We observe that the Golod property of
an ideal can be detected through the vanishing of certain maps
induced in homology. This observation leads us to generalize some
known results from the graded case to local rings  and obtain new
classes of Golod ideals.

 \end{abstract}

 \author[R.~ Ahangari Maleki ]{Rasoul Ahangari Maleki}
\address{Department of Mathematics, Alzahra University, Vanak, Tehran,
Iran, Zip Code 19834; and School of Mathematics, Institute for
Research in Fundamental Sciences (IPM), P.O. Box: 19395-5746,
Tehran, Iran } \email{rahangari@ipm.ir}





\subjclass[2010]{13A02, 13D02, 13H02 } \keywords{Powers of ideals,
Golod rings; linear resolutions}
\thanks{This work  was  jointly supported by the Iran National Science Foundation (INSF) and Alzahra University grant No. 95001343. This research was also in part supported by a grant from IPM (No. 95130111). }

 \maketitle
\section{Introduction }
Throughout this paper  we let $(S,\n,k)$ denote a regular  local
ring with maximal ideal $\n$ and residue field $k$ or a polynomial
ring over a field $k$ with graded maximal ideal $\n$. All modules
are assumed to be graded if the ring is graded. Let $I$ be an ideal of $S$.  The Poincar\'{e} series of a finitely
generated $R=S/I$-module $M$  is denoted by $P_{M}^{R}(t)$ and is
defined to be formal power series $\sum_{i\geq
0}\dim_k\Tor_{i}^{R}(M,k)t^i$. In general, this power series is not
a rational function. We refer the reader for the history of the
rationality of Poincar\'{e} series to the survey article \cite{Av}
by Avramov. On the other hand, Serre \cite{Ser} showed that  there
is a coefficientwise inequality of formal power series
\begin{equation}\label{Serre}
P_k^{R}(t)\leq\frac{(1+t)^d}{1-t\sum_{i>0} \dim_k H_i(\mathcal{K})t^i},
\end{equation}

where $d$ is the embedding dimension of $R$ and $\mathcal{K}$ is the
Koszul complex of $R$ with respect to a minimal system of generators
of its maximal ideal.

We say that the ring $R$ or the ideal $I$ is Golod if $P_k^{R}(t)$
coincides with the upper bound given by Serre.
 Golod rings are an
example of good rings in the sense that all finitely generated
modules over such rings have rational Poincar\'{e} series sharing a
common denominator, see \cite{Av-K}. Many results regarding classes
of Golod ideals are established in the case of graded rings. If $S$
is a polynomial ring over a field of characteristic zero, Herzog and
Huneke in \cite{HH} identified large classes of Golod ideals. They
showed, among other results, that the powers $I^m$ of an ideal $I$
are Golod for all $m\geq 2$.

The main goal of this paper is to study the Golod property of ideals
of a regular local ring. In view of  results of Herzog and Huneke,
it is a natural question to ask whether  the
 results of  \cite{HH} hold if $S$ is a polynomial ring over a field of
 arbitrary characteristic or more generally if $S$ is a regular local
ring rather than a polynomial ring. A known fact in this direction
is a result of Herzog et al. \cite{HWY} which says that large powers
of an ideal are Golod. Another  evidence in support of the question
is that if $I$ is a complete intersection, then  $I^m$ is Golod for
all $m\geq 2$, see \cite{Av2} and \cite{Gv}.  Herzog and the author
\cite{HA} recently   removed the assumption on characteristic of $k$
of the results of  \cite{HH} for monomial ideals. The methods used
in the proofs of the results we mentioned above vary from one case
to another, yet the nature of the results suggests that there is
some common ground among them. This observation allows us to
partially generalize to local rings some of the known results for
graded rings.

Let $\rho(I)$ denote the smallest number $m$ such that for all $r>m$
and all $i>0$ the natural map $ \Tor^S_i(S/I^r,k)\rightarrow
\Tor^S_i(S/I^{r-1},k)$ is the zero map. In Section 2 we show that
the invariant $\rho(I)$ is finite and the following holds:

\begin{thm}
Let $m$ be a positive integer and  $J$ be an ideal of $S$. Then the
following hold:
\begin{enumerate}
\item If  $m>\rho(I)$  and
$I^{2m-2}\subseteq J \subseteq I^m$, then  $J$ is Golod. In
particular, $I^m$ is Golod.

\item  If $d= \dim S$ and
$m\geq\max\{2 d,\rho(I)+d\}$. Then $\overline{I^{m}}$, the integral
closure of $I^{m}$, is Golod.
\end{enumerate}
\end{thm}
If $\rho(I)=1$, then it follows at once from this theorem that $I^m$
is Golod for all $m\geq 2$. This suggests the question
\begin{ques}\label{intq}
Is it true that $\rho(I)=1$ for any ideal $I$ of $S$?
\end{ques}
We show that  Question  \ref{intq} has an affirmative answer
provided that either $S$ is a polynomial ring over a field of
characteristic zero or $S$ has Krull dimension at most $2$, see
Proposition \ref{graded} and Theorem \ref{local}. The result in the
graded case is an immediate consequence of the work of Herzog and
Huneke. When $S$ is local, we prove in  Proposition \ref{ci} that
$\rho(I)=1$ if $I$ is a complete intersection ideal. A similar
conclusion for a monomial ideal $I$, without any assumption on
characteristic, holds true, see \cite{HA}.

Section $3$ of this paper is devoted to study the Golod property of
Koszul ideals. We say that an ideal $I$ of $S$ is Koszul if its
associated graded module with respect to maximal ideal $\n$  has a
linear resolution over the associated graded ring $\oplus_{i\geq
0}\n^i/\n^{i+1}$. This notion can be considered as a generalization
of the notion of  componentwise linear ideal. A necessary (not
sufficient ) condition for an ideal $I$ to be Koszul is that the
natural map $\upsilon^S _i(I):\Tor^S_i(\n I,k)\rightarrow
\Tor^S_i(I,k)$ is zero for all $i$. In \cite{HRW} it is proved that
any componentwise linear ideal of a polynomial ring $S$ is Golod. We
extend this result by showing that
\begin{thm}
Assume  $\upsilon^S _i(I)=0$ for all $i$.  Then we have the following:
\begin{enumerate}
\item If  $I\subseteq \n^2$, then $I$ is Golod.
\item  If $I^2\subseteq L\subseteq \n I$, then the ideal  $L$ is Golod. In particular, for any proper ideal $ J \supseteq I$ the ideal $IJ$ is Golod.
\end{enumerate}
\end{thm}
The condition $I\subseteq \n^2$ is not necessary if $S$ contains a field, see Theorem \ref{field}.

\section{ Massey operations and vanishing of maps of $\Tor$}

We use throughout the notation  $K$ to  stand for the Koszul complex of $S$ with
respect to a minimal system of generators of $\n$. The Koszul complex has a differential graded (DG) algebra structure and resolves $k$ over $S$.
We will denote by
$Z$ and $B$ the subcomplexes of the cycles and boundaries of
$K$ respectively.

 Let $I$ be a proper ideal of $S$. Set $R=S/I$ and let $\mathcal{K}$ denote the Koszul complex of $R$
 with respect to a minimal system of generators of the maximal ideal of $R$.  This complex has a DG algebra structure.
 Golod \cite{G} showed that the equality in (\ref{Serre})
 can be characterized by vanishing of all Massey operations in $\mathcal{K}$. By definition this is the case if  $\mathcal{K}$ admits a
trivial Massey operation, see \cite{Av-K} and  \cite{GU}. That is,
for some homogeneous $k$-basis $\mathcal{B}=\{h_{\lambda}\}_{\lambda
\in \Lambda}$ of $H_{\geq 1}(\mathcal{K}):=\oplus _{i\geq
1}H_i(\mathcal{K})$ there exists a function
$\mu:\bigsqcup_{p=1}^{\infty} \mathcal{B}^p \rightarrow \mathcal{K}$
subject to the following conditions:
 \begin{enumerate}
\item [$(M1)$] $\mu(h_{\lambda})=z_{\lambda}\in Z(\mathcal{K})$ with
$[z_{\lambda}]=h_{\lambda}$;\\
\item [$(M2)$]  $\partial \mu(h_{\lambda_1},\ldots,h_{\lambda_p})=
\displaystyle\sum_{j=1}^{p-1}\overline{\mu(h_{\lambda_1},\ldots,h_{\lambda_j})}\mu(h_{\lambda_{j+1}},\ldots,h_{\lambda_p})$
\ \
for $p\geq 2$;\\
\item [$(M3)$] $\mu(\mathcal{B}^p)\subseteq \m \mathcal{K}$ \ \ for $p\geq 1$.
\end{enumerate}
Here we use $\partial$ for  the differential map of $\mathcal{K}$ and  $|a|$ for the degree of  a (homogeneous)
element $a$ of $\mathcal{K}$. Also  we set $\bar{a}=(-1)^{|a|+1}a$.

Therefore the ring $R=S/I$ or the ideal $I$ is Golod  if and only if
$\mathcal{K}$ admits a trivial Massey operation. Observe that
condition $(M2)$ implies that $\mu(h_{\lambda_1})\mu(h_{\lambda_2})$
is a boundary for all $h_{\lambda_1},h_{\lambda_2}\in\mathcal{B}$.
This means that $H(\mathcal{K})$ has trivial multiplication. As it
is shown in \cite{K} this condition does not suffice to characterize
Golod rings.

To show that  an ideal of $S$ is Golod  we utilize the following simple observation of the construction of a trivial Massey operation.
The idea of this is motivated by \cite[Lemma 1.2]{RS}. We apply
similar technique for the proof.

\begin{lem}\label{cycle}
Let $I$ and $L$ be ideals of $S$  such that $L^2\subseteq I\subseteq
L$. Suppose  that  the map $$\Tor^S_i(S/I,k)\rightarrow
\Tor^S_i(S/L,k),$$  induced by the natural surjection
$S/I\rightarrow S/L$, is zero for all $i\geq 1$. Then the ideal $I$
is Golod.

\end{lem}

\begin{proof}

 To prove that $I$ is Golod  (i.e the ring $R=S/I$ is Golod ) we show that $\mathcal{K}$, the Koszul complex of $R$, admits a
trivial Massey operation. If we  establish that   $\mu$ can
be chosen in such a way that $\mu(h_{\lambda_1})\mu(h_{\lambda_2})=0$ for all
$h_{\lambda_1}, h_{\lambda_2}\in \mathcal{B}$, then by setting $\mu(h_{\lambda_1},\ldots,h_{\lambda_n})=0$ for all $n\geq
2$,  obviously $(M2)$ is satisfied and $R$ is Golod. To this end we
note that  the map $\Tor^S_i(S/I,k)\rightarrow \Tor^S_i(S/L,k)$ can be identified with the map
\begin{equation}\label{map}
H_i(\mathcal{K})\rightarrow H_i(\mathcal{K}/L\mathcal{K})
\end{equation}
induced by the natural projection $\mathcal{K}\rightarrow
\mathcal{K}/L\mathcal{K}$.  For each $h_{\lambda}\in\mathcal{B}$,
the vanishing of  (\ref{map}) gives that $h_{\lambda}$ can be
represented as $h_{\lambda}=[z_{\lambda}]$ for some
 $z_{\lambda}\in Z(\mathcal{K})\cap
L\mathcal{K}$.
 Now we define $\mu(h_{\lambda})=z_{\lambda}$. The assumption
$L^2\subseteq I$ implies that
$\mu(h_{\lambda_1})\mu(h_{\lambda_2})=0$ for any two elements
$h_{\lambda_1}, h_{\lambda_2}\in \mathcal{B}$.
\end{proof}
For the remaining of the paper we will apply Lemma \ref{cycle} to
obtain new classes of Golod ideals.

\section{Golodness of powers of ideals}
In this section we study the Golod property of powers of ideals. We
show that large powers of an ideal and their integral closures are
Golod.  We need the following lemma to prove the main results of
this section.

\begin{lem}\label{key lem}
Let $I$  be an ideal of $S$. Then there exists an integer $m_0$ such that for all $m>m_0$ the natural map
$$\Tor^S_i(S/I^m,k)\rightarrow \Tor^S_i(S/I^{m-1},k)$$  induced by the surjection $S/I^m \rightarrow S/I^{m-1}$, is zero for
all $i>0$.
\end{lem}
\begin{proof}
First note that for an ideal $L$ of $S$ we have  the  natural
isomorphism $\Tor^S_i(S/L,k)\cong\Tor^S_{i-1}(L,k)$ for all  $i>0$.
Also for each $S$-module $M$ the Koszul homology $H_i(M\otimes_S K)$
is functorially isomorphic to $\Tor^S_i(M,k)$.  Thus with these
observations, it suffices to show that there exists an integer $m_0$
such that  the containment $Z_i\cap I^mK_{i}\subseteq I^{m-1}Z_i$
holds for all $m>m_0$ and $i\geq0$.

By the Artin-Rees lemma \cite[Lemma 5.1]{E}, there exists an integer
$m_0$ such that for all $m>m_0$
\begin{equation}\label{inc1}
Z_i\cap I^mK_{i}\subseteq I^{m-m_0}(Z_i\cap I^{m_0}K_{i})
\end{equation}
for all $i\geq 0$. On the other hand since $H_i(I^{m_0}K)$ is a
$k$-vector space  we get the first link in the following chain
\begin{equation}\label{inc2}
I(Z_i\cap I^{m_0}K_{i})\subseteq B_i(I^{m_0}K)\subseteq I^{m_0}Z_i.
\end{equation}
 Multiply (\ref{inc2}) by $I^{m-{m_0}-1}$ and combine the resulting inclusion with (\ref{inc1}) to get $Z_i\cap
I^mK_{i}\subseteq I^{m-1}Z_i$ for all $i\geq0$.
\end{proof}
Lemma \ref{key lem} guarantees that  for any proper ideal
$I\subseteq S$ the number

\[\rho(I):=\min\{m: \Tor^S_i(S/I^r,k)\rightarrow \Tor^S_i(S/I^{r-1},k) \ \ \text{is zero for all $r>m$ and  all $i>0$} \}\]
 exists.

 Herzog et al. \cite{HWY} showed that all higher powers
of an ideal of a regular local ring are Golod. The following theorem
generalizes  this result with a simpler proof.
\begin{thm}\label{Golod}
Let $I$  be an ideal of $S$ and  $m$ be a positive integer with
$m>\rho(I)$. If $J$ is an ideal of  $S$ such that $I^{2m-2}\subseteq
J \subseteq I^m$, then $J$ is Golod. In particular, $I^m$ is Golod.
\end{thm}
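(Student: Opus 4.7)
The plan is to apply Proposition~\ref{cycle} to the ideal $\af^p$, in the role of $\af$ there, with a carefully chosen auxiliary ideal in the role of $\bfr$. Write $n := \rho(\cf)$ for brevity and set $\bfr := \cf^{p(m-n)}$. The three requirements to verify are (a)~$\bfr^2 \subseteq \af^p$, (b)~$\af^p \subseteq \bfr$, and (c)~$\mathcal{Z}_i \cap \af^p K_i \subseteq \bfr\,\mathcal{Z}_i$ for every $i \geq 1$.

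Conditions (a) and (b) are pure bookkeeping on exponents. For (a), taking $p$-th powers of the hypothesis $\cf^{2(m-n)} \subseteq \af$ yields $\bfr^2 = \cf^{2p(m-n)} = (\cf^{2(m-n)})^p \subseteq \af^p$. For (b), the hypothesis $\af \subseteq \cf^m$ gives $\af^p \subseteq \cf^{mp}$, and $mp \geq p(m-n)$ forces $\cf^{mp} \subseteq \cf^{p(m-n)} = \bfr$.

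The substantive step is (c), and this is where the definition of the Koszul Artin-Rees number enters. Because $mp \geq m > n$, the defining property of $\rho(\cf)$ (see Remark~\ref{ro}) applies to the exponent $mp$ and gives $\mathcal{Z}_i \cap \cf^{mp}K_i \subseteq \cf^{mp-n}\mathcal{Z}_i$. Combined with $\af^p K_i \subseteq \cf^{mp} K_i$, this yields
$$\mathcal{Z}_i \cap \af^p K_i \;\subseteq\; \mathcal{Z}_i \cap \cf^{mp}K_i \;\subseteq\; \cf^{mp-n}\mathcal{Z}_i.$$
Since $mp - n \geq p(m-n)$ (equivalently $(p-1)n \geq 0$), we get $\cf^{mp-n}\mathcal{Z}_i \subseteq \cf^{p(m-n)}\mathcal{Z}_i = \bfr\,\mathcal{Z}_i$, which is (c). Proposition~\ref{cycle} then concludes that $\af^p$ is Golod. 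The ``in particular'' statement is just the instance $p=1$, $\af = \cf^m$ (for which the hypothesis $\cf^{2(m-n)} \subseteq \cf^m$ forces $m \geq 2n$).

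The only real obstacle is to spot the correct auxiliary ideal; once the choice $\bfr = \cf^{p(m-n)}$ is on the table, the verification is arithmetic on exponents together with a single invocation of Remark~\ref{ro}. The exponent $p(m-n)$ is the unique choice that simultaneously beats $\af^p$ from above (via $\af \subseteq \cf^m$) and beats $\af^p$ from below when squared (via $\cf^{2(m-n)} \subseteq \af$), so the symmetric hypothesis of the theorem is exactly what is needed.
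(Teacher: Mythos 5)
Your proposal is correct and takes essentially the same approach as the paper: both apply Proposition~\ref{cycle} to $\af^p$ with an auxiliary power of $\cf$, using the defining containment of $\rho(\cf)$ for the cycle condition. The only difference is the choice of exponent --- the paper sets $\bfr=\cf^{mp-\rho(\cf)}$ while you take the slightly larger $\cf^{p(m-\rho(\cf))}$ --- and both verifications reduce to the same arithmetic on exponents (including the same observation that the ``in particular'' clause really concerns $m\geq 2\rho(\cf)$, since otherwise the hypothesis with $\af=\cf^m$ need not hold).
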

\begin{proof}
 Applying  Lemma \ref{cycle} for $L=I^{m-1}$, it suffices to show that the  map $\Tor^S_i(S/J,k)\rightarrow
 \Tor^S_i(S/L,k)$, induced by the surjection
 $S/J\rightarrow S/L$,
 is zero for
all $i> 0$. But this map factors through $\Tor^S_i(S/I^{m},k)$,
and the natural map $$\Tor^S_i(S/I^m,k)\rightarrow
\Tor^S_i(S/I^{m-1},k)$$ is zero as $m>\rho(I)$. This finishes
the proof.
\end{proof}
Let $S$ be  a polynomial ring over a field $k$ of characteristic
zero and $I$ be a graded ideal of $S$. In \cite[Theorem 2.11]{HH}
it is shown that the integral closure $\overline{I^{m}}$  of
$I^{m}$ is Golod for all $m>d$ where   $d$ is the Krull
dimension of $S$. For a monomial ideal $I$ the authors proved that
$\overline{I^{m}}$ is Golod for all $m\geq 2$. Very recently in
\cite{HA} the same result has been proved for monomial ideals with
no assumptions on the characteristic. The next result shows that
over a regular local ring or a polynomial ring $S$, with no
additional assumption on characteristic, the integral closure of
$I^m$ is Golod  provided that $m$ is large enough.

\begin{thm}\label{integral}
Let $I$  be an ideal of $S$ and  $m$ be a positive integer with
$m\geq\max\{2 d,\rho(I)+d\}$. Then $\overline{I^{m}}$  is Golod.
\end{thm}
\begin{proof}
We verify the conditions of Lemma \ref{cycle} with
 $L=I^{m-d}$.

First note that we have the inclusions $I^{2(m-d)}\subseteq
\overline{I^{m}}\subseteq I^{m-d+1}$: the first inclusion is due
to the hypothesis and the second one follows from Brian\c{c}on-Skoda
Theorem, see \cite{SH}. On the other hand the map
$\Tor^S_i(S/\overline{I^{m}},k)\rightarrow \Tor^S_i(S/L,k)$ factors through
the map $\Tor^S_i(S/I^{m-d+1},k)\rightarrow
\Tor^S_i(S/I^{m-d},k)$ which is zero for all $i>0$ as
$m-d+1>\rho(I)$. Hence the conditions of Lemma
\ref{cycle} hold and $\overline{I^{m}}$  is Golod.
\end{proof}
By definition for any ideal $I$ of $S$ we have $\rho(I)\geq 1$. In view of Theorem \ref{Golod}, it is interesting to
know  when $\rho(I)=1$. This is the case, at least, when $S$ is a
polynomial ring over a field of characteristic zero and $I$ is a
graded ideal of $S$.

\begin{rem}\label{derivation}
Let $S=k[x_1,\ldots,x_d]$ be a graded polynomial ring over a field
$k$ of characteristic zero and  $I\subseteq(x_1,\ldots,x_d)^2$ be a graded ideal of $S$. Set
$R=S/I$ and let $\mathcal{K}=K\otimes_SR$ be the Koszul complex of
$R$, where $K$ is the Koszul complex of $S$ with respect to
variables $x_1,\ldots,x_d$. Herzog \cite[Corollary 2]{H} gives an
explicit description of cycles of $\mathcal{K}$ in terms of the
data of the minimal free $S$-resolution of $R$. A direct consequence
of this result is that (see the proof of \cite[Theorem 1.1]{HH})
there is a subset $\mathcal{B}$ of $Z(\mathcal{K})\cap
\partial(I)\mathcal{K}$ such that the homology classes of the
elements of $\mathcal{B}$ form a $k$-basis for $H_{\geq
1}(\mathcal{K})$. Here  $\partial(I)$ denotes the ideal generated by
partial derivatives $\partial f/\partial x_i$ with $f\in I$ and
$i=1,\ldots,d$.
\end{rem}
\begin{prop}\label{graded}
Let $S$  be as  in Remark \ref{derivation}. Assume that $I\subseteq
J$ are graded ideals of $S$ such that $\partial(I)\subseteq J$. Then
the map $\Tor^{S}_i(S/I,k)\rightarrow \Tor^{S}_i(S/J,k)$, induced by
the surjection $\pi:S/I\rightarrow S/J$, is zero for all $i>0$. In
particular $\rho(I)=1$.

\end{prop}
\begin{proof}
First we  note that  the  map $\Tor^{S}_i(S/I,k)\rightarrow
\Tor^{S}_i(S/J,k)$ can be identified with the natural map
\[H_i(K\otimes \pi): H_i(K\otimes S/I)\rightarrow H_i(K\otimes
S/J).\] Thus it suffices to show that $H_i(K\otimes \pi)$ is the
zero map for all $i>0$, and it is enough to show this for the basis
elements  of $H_i(K\otimes S/I)$.  Let $h\in H_i(K\otimes S/I)$ be a
basis element. Then applying Remark \ref{derivation} $h$ can be
represented as $[z]$ for some cycle $z$ with coefficients in
$\partial(I)$. Since $\partial(I)\subseteq J$ we see that $(K\otimes
\pi)(z)$ is zero in $K\otimes S/J$. Therefore $H_i(K\otimes
\pi)([z])=0$. In particular, since $\partial(I^m)\subseteq I^{m-1}$
for any $m\geq 1$, we see that the natural map
$\Tor^{S}_i(S/I^m,k)\rightarrow \Tor^{S}_i(S/I^{m-1},k)$ is zero for
all $i>0$ and all $m\geq 1$. Then by definition $\rho(I)=1$.
\end{proof}

In light of Proposition \ref{graded}, we pose the following
question.
\begin{ques}\label{ques2} Let $S$  be a regular local ring. Is it true that $\rho(I)=1$ for any proper ideal
$I$ of $S$ or, equivalently,  the map
$$\Tor^{S}_i(S/I^m,k)\rightarrow \Tor^{S}_i(S/I^{m-1},k)$$ is zero
for all $i\geq 1$ and all $m\geq 1$?
\end{ques}
A first result in support of a positive answer to this question is
in the case of local rings of Krull dimension at most 2.

\begin{thm}\label{local}
Let $I$ be an  ideal of  a regular local ring   $S$ of    dimension at most $ 2$. Then
$\rho(I)=1$.
\end{thm}
\begin{proof}
The case of dimension $1$  is obvious. So, we assume that $S$ has
dimension 2. Let $m\geq 1$. We show that the natural map
$$\varphi_i:\Tor^{S}_i(S/I^m,k)\rightarrow \Tor^{S}_i(S/I^{m-1},k)$$
is zero for each $i=1,2$. For the case where $i=1$, the map
$\varphi_1$
 is identified with the natural map  $I^m/\n I^m\rightarrow
I^{m-1}/\n I^{m-1}$ which is
clearly zero.\\
\indent Since $S$  has dimension $2$,  for any finitely generated
$S$-module $N$  we have the natural isomorphisms
$$\Tor_2^S(N,k)\cong H_2(N\otimes K)\cong (0:_N\n).$$ Thus  $\varphi_2$ is zero if and only if
$(I^m:\n)\subseteq I^{m-1}$. In order to prove the inclusion it is
enough to show that
$(I^m:x)\subseteq I^{m-1}$ for any  $x\in \n\setminus \n^2$.\\
\indent Let $x\in \n\setminus \n^2$. Then the ring $S/xS$ is a
regular local ring of dimension one and so the image of $I$ in
$S/xS$ is generated by an element $xS+u$ for some $u\in I$. Now it
is easy to see that there are elements $r_1,\ldots, r_t$ in $S$ such
that $I=(u, r_1x,\ldots, r_tx)$. One has  $I^m\subseteq Su^m+
xI^{m-1}$. If $z\in (I^m:x)$, then we can write $zx=su^m+bx$ for
some $s\in S$ and $b\in I^{m-1}$ . Since $(x)$ is a prime ideal of
$S$ we get $u\in (x)$
 or $s\in(x)$. In both cases, one can  obtain
 that $z\in I^{m-1}$. Therefore $(I^m:x)\subseteq I^{m-1}$.
\end{proof}

Complete intersection ideals provide another piece of evidence in
support of a positive answer to Question \ref{ques2}, as the
following proposition shows. This can also be concluded from
\cite[Remark 2.12]{RSY}. But, we include a different proof here for
the sake of completeness. The particular case of this proposition is
known by \cite[Corollary 4.4]{Gv} and \cite[Theorem 6.7]{Av2} .
\begin{prop}\label{ci}
Let $I$ be an ideal of $S$ generated by a regular sequence. Then $\rho(I)=1$. In particular, $I^m$ is Golod for all $m\geq 2$.
\end{prop}
\begin{proof}
Let $ u_1,\ldots, u_p$ be a regular sequence which generates $I$.
Assume that $r$ is an integer such that  $0\leq r\leq p$. We  set
$I_r=(u_1,\ldots,u_r)$  if $r>0$ and  $I_r=(0)$ if $r=0$. To prove
the assertion  it is enough to show that for each $0\leq r\leq p$
and $m>0$ the natural map $\Tor^S_i(I_r^m,k)\rightarrow
\Tor^S_i(I_r^{m-1},k)$ is zero for all $i\geq0$. We
 do this by induction on $n=r+m$. The first step $n=1$ is obvious.
Let  $n>1$ and assume that the statement holds for all $r$ and $m$
such that $r +m \leq n-1$. We want to prove it when $n=r+m$. Since $\gr_{I_{r-1}}(S)=\oplus_{m\geq 0}I^m_{r-1}/I^{m+1}_{r-1}$ is naturally isomorphic to to the polynomial ring $S/I_{r-1}[x_1,\ldots,x_{r-1}]$, see \cite[Theorem 1.1.8]{BH},  the image of $u_r$ in $S/I_{r-1}$ is a regular sequence on $\gr_{I_{r-1}}(S)$. Using this one can see that $I^m_{r-1}=(I^m_{r-1}:u_r)$ for all $r$ and $m$.  We have then
the following short exact sequence
\begin{equation}\label{exact}
0\rightarrow I_{r-1}^m\rightarrow I_r^{m-1}\oplus
I_{r-1}^m\rightarrow u_rI_r^{m-1}+I_{r-1}^m =I_r^m\rightarrow 0
\end{equation}
where the first map is given by $a\mapsto(a,u_ra)$ and the second one is given by  $(b,a)\mapsto u_rb-a$.
 Here as usual for an ideal $J$ of $S$ we set $J^{i}=S$  if $i\leq0$.
Using the  above exact sequence, we get the following  commutative
diagram with exact rows and columns
\begin{equation}\label{diag}
\xymatrixcolsep{1.4pc}
\xymatrix{
& 0 \ar[d] & 0  \ar[d] & 0 \ar[d] & \\
0 \ar[r] &  I_{r-1}^m \ar[r] \ar@{^{(}->}[d]     & I_r^{m-1}\oplus I_{r-1}^m \ar[r] \ar@{^{(}->}[d]                                & I_r^m \ar[r] \ar@{^{(}->}[d]^q & 0\\
0 \ar[r] &  I_{r-1}^{m-1} \ar[r] \ar[d]^f & I_r^{m-2}\oplus I_{r-1}^{m-1} \ar[r] \ar[d]^h                             & I_r^{m-1} \ar[r] \ar[d]^g & 0\\
0 \ar[r] & I_{r-1}^{m-1}/I_{r-1}^m \ar[r]^-{\alpha}\ar[d] & I_r^{m-2}/I_r^{m-1}\oplus I_{r-1}^{m-1}/ I_{r-1}^m  \ar[r]^-{\beta}\ar[d] &  I_r^{m-1}/I_r^m \ar[r]\ar[d] & 0\\
& 0 & 0  & 0 &
 }
\end{equation}
where  the maps $f,g,h$ are the natural surjections and $\alpha $ is
given by   $a+I_{r-1}^{m}\mapsto (0,u_ra+ I_{r-1}^m)$ with $a\in
I_{r-1}^{m-1}$. Using this formula of $\alpha$ and the fact that the
functor $\Tor_i^S(-,k)$ is an $S$-linear functor, one can see that
 the induced map
$$ \Tor^S_i(I_{r-1}^{m-1}/I_{r-1}^m,k)
\xrightarrow{\tilde{\alpha}_i} \Tor^S_i(I_r^{m-2}/I_r^{m-1},k)\oplus
\Tor^S_i(I_{r-1}^{m-1}/ I_{r-1}^m ,k)$$ is the zero map for all $i$.
 Then  the diagram (\ref{diag}) induces the following one with exact rows and columns
\begin{equation}\label{diag 2}
\xymatrixcolsep{1.4pc}
\xymatrix{
 & \Tor^S_i(I_r^{m-1},k)\oplus \Tor^S_i( I_{r-1}^m,k) \ar[r] \ar[d]^{0}                                & \Tor^S_i(I_r^m ,k)\ar[r] \ar[d]^{\tilde{q}_i}&\Tor^S_{i-1}( I_{r-1}^m,k)\ar[d]^0\\
& \Tor^S_i(I_r^{m-2},k)\oplus \Tor^S_i(I_{r-1}^{m-1},k) \ar[r] \ar[d]^{\tilde{f_i} }                            & \Tor^S_i(I_r^{m-1},k)\ar[r]^-{\Delta_i}\ar[d]^{\tilde{h}_i}&  \Tor^S_{i-1}(I_{r-1}^{m-1},k)\ar[d]^{\tilde{g}_i} \\
0 \ar[r]& \Tor^S_i(I_r^{m-2}/I_r^{m-1},k)\oplus
\Tor^S_i(I_{r-1}^{m-1}/ I_{r-1}^m ,k) \ar[r]^-{\tilde{\beta_i}} &
\Tor^S_i(I_r^{m-1}/I_r^m,k)\ar[r]&\Tor_{i-1}(I_r^{m-1}/I_r^m,k)
 }
  \end{equation}
where the tilde maps are induced by applying the functor
$\Tor^S(-,k)$ to the diagram (\ref{diag}). The zero maps in the
columns are due to the induction hypothesis. Hence $\tilde{f_i}$ and
$\tilde{g_i}$ are injective. We want to show that $\tilde{q}_i$ is
the zero map or equivalently $\kernel \tilde{h}_i=0$. Applying \cite[Lemma 3.2]{CE} to the bottom half of the diagram, we get an exact sequence
$$\kernel \tilde{f_i}\rightarrow \kernel \tilde{h}_i\rightarrow \kernel \tilde{g}_i.$$
Since  $\tilde{f_i}$ and
$\tilde{g_i}$ are injective we get the desired conclusion. The particular case immediately follows from Theorem \ref{Golod}.
\end{proof}

\section{Golodness of Koszul ideals }  Let
$(R,\m,k)$ be a local ring  with  maximal ideal $\m$ or a standard
graded $k$-algebra with graded maximal ideal $\m$. Let $M$ be a
finitely generated $R$-module. In the graded case, $M$ is assumed to
be graded. The notion of Koszul module introduced by Herzog and
Iyengar \cite{HI}. They say that $M$ is Koszul if the linear part of
a minimal free resolution of $M$ is acyclic or equivalently if its
associated graded module $\gr_{\m}(M)=\oplus _{i\geq
0}\m^iM/\m^{i+1}M$, as a graded $\gr_{\m}(R)$-module, has a linear
resolution \cite[Proposition 1.5]{HI}. If an ideal of $R$ is Koszul
(as an $R$-module) we shall call it a  \textit{Koszul ideal}. Also,
if the residue field $k$ is Koszul we will say that the ring $R$ itself is
Koszul. Note that in the graded case $\gr_{\m}(R)$ is identified
with $R$ itself and  any graded $R$-module with linear resolution is
 Koszul. However such graded modules are not the only modules which
are Koszul see \cite[Example 1.9]{HI}. If $R$ is a graded Koszul
algebra there is a characterization of (graded) Koszul modules due
to R\"{o}mer \cite[Theorem 3.2.8]{T}: A graded $R$-module $M$ is
Koszul if and only if $M$ is componentwise linear.
\\
\indent Let $S$ be a polynomial  ring with the standard grading. It
is known  that every graded ideal of $S$ with linear resolution is
Golod \cite{AF}. This result has been generalized by  Herzog, Reiner
and Welker for  componentwise linear ideals of $S$ \cite{HRW}. Since
$S$ is a Koszul algebra, in view of the characterization of
R\"{o}mer, this can be restated in the following form:  any Koszul
ideal of $S$ is Golod.  Next we will generalize this restatement of
the result of \cite{HRW} under a weaker hypothesis
 to the local case.
 Let first recall some facts.\\
\indent Assume that $N$ is a finitely generated $S$-module. Let
$$\upsilon^S _i(N): \Tor^S_i(\n N,k)\rightarrow
\Tor^S_i(N,k)$$  be  the map which is induced by the inclusion $\n
N\subseteq N$. Then by a result of \c{S}ega \cite[Theorem
3.2(2)]{Se}, one has the following characterization of Koszul
modules.
\begin{thm}\label{Koszul 1}
An $S$-module $M$ is Koszul if and only if
$$\upsilon^S_i(\n^jM)=0 \ \ \text{for all }\ \  i\geq 0 \ \ \text{and all} \ \ j\geq 0,$$
where by convention $\n^0=S$.
  \end{thm}
 Let $S$  be a polynomial ring over a field  with the standard grading and $M$  a graded $S$-module  generated in a single degree.
 It is straightforward to see that the condition  $\upsilon^S _i(M)=0$ for all $i\geq0$,
 provides a necessary and sufficient condition for Koszulness of
 $M$. However when $M$ is not generated in  a single degree, this
 condition does not suffice for $M$ to be Koszul (i.e. componentwise linear) as
 the  following example shows:

 \begin{exam}
Let $k$ be a field of characteristic zero and $S=k[x,y,z]$. Let
$\n=(x,y,z)$, $J=(x^3,y^3,x^2z)$ and $I=J+\n^4$. The ideal $J$ does
not have a linear resolution, so $I$ is not a componentwise linear
ideal. Using a computer algebra system, the minimal free resolutions
of $I$ and $\n I$ are as follows:
$$0 \rightarrow S(-6)^7 \rightarrow S(-4)\oplus S(-5)^{15} \rightarrow S(-3)^3 \oplus S(-4)^7 \rightarrow I\rightarrow 0$$

$$
0 \rightarrow S(-6)^3 \oplus S(-7)^7\rightarrow S(-5)^9 \oplus
S(-6)^{14} \rightarrow S(-4)^8 \oplus S(-5)^6 \rightarrow
I\n\rightarrow 0$$ We claim that $\upsilon^S _i(I)=0$ for all
$i\geq0$. First note that the following conditions are equivalent:
\begin{enumerate}
\item $\upsilon^S _i(I) =0$ for all $i\geq 0$;
\item $0\rightarrow \Tor^S_i(I,k)\rightarrow
\Tor^S_i(I/\n I,k) \rightarrow \Tor^S_{i-1}(\n I,k)\rightarrow
0$ is exact for all $i\geq 0;$
\item $\beta_i(k)\beta_0(I)=\beta_i(I)+\beta_{i-1}(\n I)$ for all $i\geq
0$.
\end{enumerate}
Now comparing the Betti numbers of $I $ and $\n I$, one can see that condition $(3)$ is satisfied and then
 $\upsilon^S _i(I) =0$ for all $i\geq 0$.
\end{exam}

The following extends the result of \cite{HRW}  concerning Golodness
of componentwise linear ideals.
\begin{thm}\label{Ko-Golod}
Let $(S,\n,k)$ be a regular local ring (or a polynomial ring over
$k$) and $I\subseteq\n^2$ be an ideal  of $S$. Assume that the
natural map
$$\upsilon^S _i(I):\Tor^S_i(\n I,k)\rightarrow \Tor^S_i(I,k)$$ is
zero for all $i\geq0$. Then $I$ is Golod
\end{thm}
\begin{proof}
First note that since  $I\subseteq\n^2$, the complex $K\otimes_SS/I$ is isomorphic to the Koszul complex of $R=S/I$ with respect to a minimal system of generators of the maximal ideal of $R$.  There is a natural isomorphism $K\otimes_SS/I\cong K/I K$ of DG
algebras. We will denote by $\partial$  the differential map of the
Koszul complex $K$. The same notation will be used for the
differential map of the complex $K/I K$.

We show that $K/I K$ admits a trivial Massey
operation. Choose a set of cycles $\mathcal{C}=\{z_{\lambda}\in
K/I K\}_{\lambda\in\Lambda}$, such that
$\mathcal{B}=\{h_{\lambda}=[z_{\lambda}]\}_{\lambda\in\Lambda}$ is a
basis of $H_{\geq1}(K/I K)$. Set
$\mu(h_{\lambda})=z_{\lambda}$ and
 assume by induction that a function  $\mu:\bigsqcup_{r=1}^{p-1}
\mathcal{B}^r \rightarrow K/I K$ has been constructed for some $p
\geq 2$, and  satisfies the conditions $(M1),(M2)$ and $(M3)$.
Furthermore, if $r$ is an integer with $1\leq r\leq p-1$ and
$(\lambda_1,\ldots, \lambda_r)\in \Lambda ^r$, we assume that
$\mu(h_{\lambda_1},\ldots, h_{\lambda_r})=u_{\lambda_1,\ldots,
\lambda_r}+I K$ for some $u_{\lambda_1,\ldots, \lambda_r}\in \n K$.
Note that in the first step of the construction, since $z_{\lambda}$
is a cycle we can write $z_{\lambda}=u_{\lambda}+I K$ for some
$u_{\lambda}\in \n K$, this is due to \cite[Lemma 4.1.6(2)]{Av}. We
want to define $\mu$ on $\mathcal{B}^p$ such that the condition
$(M2)$ is satisfied. To this end it is enough to show that for each
$(\lambda_1,\ldots, \lambda_p)\in \Lambda ^p$, the element
$$z_{\lambda_1,\ldots, \lambda_p}:=\displaystyle\sum_{j=1}^{p-1}\overline{\mu(h_{\lambda_1},\ldots,h_{\lambda_j})}\mu(h_{\lambda_{j+1}},\ldots,h_{\lambda_p}),$$
which is a cycle, is indeed  a boundary. Following the above notation
we can write
$$z_{\lambda_1,\ldots, \lambda_p}=\displaystyle\sum_{j=1}^{p-1}\overline{u_{\lambda_1,\ldots,\lambda_j}}u_{\lambda_{j+1},\ldots,\lambda_p}+I K.$$
Set
$w=\displaystyle\sum_{j=1}^{p-1}\overline{u_{\lambda_1,\ldots,\lambda_j}}u_{\lambda_{j+1},\ldots,\lambda_p}$.
We establish that $\partial(w)\in  \n IK$. Using the Leibnitz rule,
we have:
\begin{align*}
\begin{split}
\partial(w)&=\displaystyle\sum_{j=1}^{p-1}\partial(\overline{u_{\lambda_1,\ldots,\lambda_j}})u_{\lambda_{j+1},\ldots,\lambda_p}+(-1)^{\mid
u_{\lambda_1,\ldots,\lambda_j}\mid}\overline{u_{\lambda_1,\ldots,\lambda_j}}
\partial(u_{\lambda_{j+1},\ldots,\lambda_p})\\
&=\displaystyle\sum_{j=1}^{p-1}\partial(\overline{u_{\lambda_1,\ldots,\lambda_j}})u_{\lambda_{j+1},\ldots,\lambda_p}-u_{\lambda_1,\ldots,\lambda_j}
\partial(u_{\lambda_{j+1},\ldots,\lambda_p})\\
&=\displaystyle\sum_{j=1}^{p-1}\bigg((a_j+\displaystyle\sum_{i=1}^{j-1}u_{\lambda_1,\ldots,\lambda_i}\overline{u_{\lambda_{i+1},\ldots,\lambda_j}}
)u_{\lambda_{j+1},\ldots,\lambda_p}-u_{\lambda_1,\ldots,\lambda_j}
(b_j+\displaystyle\sum_{s=1}^{p-j-1}
\overline{u_{\lambda_{j+1},\ldots,\lambda_{j+s}}}u_{\lambda_{j+s+1},\ldots,\lambda_p})\bigg)
\end{split}
\end{align*}
for some  $a_j, b_j\in I K$, with $1\leq j\leq p-1$. It is
straightforward to see that
$$\displaystyle\sum_{j=1}^{p-1}(\displaystyle\sum_{i=1}^{j-1}u_{\lambda_1,\ldots,\lambda_i}\overline{u_{\lambda_{i+1},\ldots,\lambda_j}} )u_{\lambda_{j+1},
\ldots,\lambda_p}-u_{\lambda_1,\ldots,\lambda_j}
(\displaystyle\sum_{s=1}^{p-j-1}
\overline{u_{\lambda_{j+1},\ldots,\lambda_{j+s}}}u_{\lambda_{j+s+1},\ldots,\lambda_p})=0.$$
Hence $\partial(w)\in  \n IK$ as claimed. Therefore $w+\n I K$ is a
cycle of $K/I\n K$. The homology class of $w+I K$ is the image of
$[w+\n I K]$ under
 the natural map $H(K/I\n K)\rightarrow
H(K/I K)$. On the other hand this map is zero by the hypothesis.
This implies that that $z_{\lambda_1,\ldots, \lambda_p}$ is a
boundary. Let $u_{\lambda_1,\ldots,\lambda_p}\in K$ such that
$\partial (u_{\lambda_1,\ldots,\lambda_p}+ IK)=z_{\lambda_1,\ldots,
\lambda_p}$. Extend $\mu$ to $\mathcal{B}^p$ by
$\mu(h_{\lambda_1},\ldots,
h_{\lambda_p})=u_{\lambda_1,\ldots,\lambda_p}+ IK$. Since by the
inductive hypothesis, $u_{\lambda_1,\ldots,\lambda_j} ,
u_{\lambda_{j+1},\ldots,\lambda_p}\in\n K$ for all $1\leq j\leq
p-1$, we see that $z_{\lambda_1,\ldots, \lambda_p}\in \n^2(K/I K)$.
Therefore $u_{\lambda_1,\ldots,\lambda_p}\in \n K$ again due to
\cite[Lemma 4.1.6(2)]{Av}. This completes the inductive step.
\end{proof}
\begin{cor}\label{Koszulideal}
Let $I$ be  a Koszul ideal of $S$. Then $\n^jI$ is a Golod ideal for
all $j>0$. Furthermore, if  $I\subseteq \n^2$, then  $I$ itself  is
Golod.
\end{cor}
\begin{proof}
Since $I$ is Koszul, we have $\upsilon^S_i(\n^jI)=0$ for all $i,j\geq
0$, by Theorem \ref{Koszul 1}. Thus the desired conclusions follow
from the above theorem.
\end{proof}
If $S$ is a polynomial ring over a field or a regular local ring containing a field, then the condition $I\subseteq \n^2$, in the above corollary is not necessary. To show this we need the following two lemmas.

\begin{lem}\label{zero}
Let $S$ be  the power series ring $k[[x_1,\ldots,x_d]]$ or the
polynomial ring $k[x_1,\ldots,x_d]$ over a field $k$.
 Assume  that $L$ is an  ideal of $k[[x_{r+1},\ldots,x_d]]$  or a graded ideal
 of $k[x_{r+1},\ldots,x_d]$ with $1\leq r \leq d$.  Then  $(x_1,\ldots,x_r)\cap LS=(x_1,\ldots,x_r)LS$.
 Moreover, $x_1,\ldots,x_r$ is a regular sequence on $S/LS$ and $LS$.
\end{lem}
\begin{proof}
We only prove the assertion in the local case. The same proof works
in the graded case. In order to simplify the notation, we denote $
x_1,\ldots,x_r$ by $\mathbf{ x}$ and we set
$Q=k[[x_{r+1},\ldots,x_d]]$. Since $\Tor^S_1(S/(\mathbf{
x}),S/LS)\cong (\mathbf{ x})\cap LS/(\mathbf{ x})LS$, we need to
show that $\Tor^S_1(S/(\mathbf{ x}),S/LS)=0$. Note that
$Q\hookrightarrow S$ is a flat extension. If $M$ is an $S$-module
and $N$ is a $Q$-module, then  there is an isomorphism
$\Tor^S_i(M,S\otimes_Q N)\cong \Tor^Q_i(M,N)$ for all $i$, see for
example \cite[Theorem 10.73]{Rot}. Since $S/(\mathbf{ x})\cong Q$,
as $Q$-modules, applying the isomorphism to $M=S/(\mathbf{ x})$ and
$N=Q/L$ we get
$$
  \Tor^S_i(S/(\mathbf{ x}),S/LS)\cong \Tor^Q_i(Q,Q/L)=0,
  $$
for all $i>0$. The sequence  $\mathbf{ x}$ is regular on an
$S$-module $M$ if the Koszul homology $H_i(\mathbf{ x},M)$ vanishes
for all $i>0$, see \cite[Corollary 1.6.19]{BH}. Since the Koszul
complex of $S$ with respect to $\mathbf{ x}$ is a  free resolution
of $S/(\mathbf{ x})$, we have $H_i(\mathbf{ x},S/LS)\cong
\Tor_i(S/\mathbf{ x},LS)=0$ for all $i>0$ and so $H_i(\mathbf{
x},LS)\cong  \Tor_i(S/\mathbf{ x},LS)=0$. Therefore $\mathbf{ x}$ is
a regular sequence on $S/LS$ and $LS$.
\end{proof}

\begin{lem}\label{subKoszul}
Let $S$  be as in the above lemma with maximal ideal  $\n$. Assume that  $I$ is a proper ideal of $S$ such that $I\nsubseteq \n^2$.
Then there is an ideal $J\subseteq \n^2$ and a part of a regular system of parameters of $S$, say $\mathbf{x}$,
such that $I=(\mathbf{x})+J$ and the initial forms of $\mathbf{x}$ in $\gr_{\n}(S)$ is a
$\gr_{\n}(J)$-regular sequence. Furthermore if $I$ is Koszul, then $J$ is Koszul.
\end{lem}
\begin{proof}
Let $r=\dim_k I/I\cap \n^2$. We can make  a change of variables and
write $I=(x_1,\ldots,x_r)+J'S$ with $J'\subseteq
k[[x_{r+1},\ldots,x_d]]\cap \n^2$ and $J'\subseteq
k[x_{r+1},\ldots,x_d]\cap \n^2$ in the polynomial case. We will show that $J:=J'S$ is the desired ideal. By
\cite[Lemma 1.1]{RV} the initial forms of $\mathbf{x}$  is a regular
sequence  on  $\gr_{\n}(J)$ if $\mathbf{x}$ is a regular sequence
on $J$ and   $(\mathbf{x})\cap \n^iJ = \mathbf{x}\n^{i-1}J $ for
all $i>0$. Since  $\mathbf{x}$ is a regular sequence on $J$ by
Lemma \ref{zero}, we only need to verify the inclusion
$(\mathbf{x})\cap \n^iJ\subseteq \mathbf{x}\n^{i-1}J $. Let
$\mathbf{x}'$ denote the sequence $x_{r+1},\ldots, x_d$ and set
$L=(\mathbf{x}')^iJ'$. We have then
\begin{align}
\begin{split}\label{valla}
(\mathbf{x})\cap \n^iJ&\subseteq(\mathbf{x})\cap\left ((\mathbf{x})\n^{i-1}J+(\mathbf{x}')^iJ\right)\\
&\subseteq (\mathbf{x})\n^{i-1}J+(\mathbf{x})\cap LS\\
&\subseteq (\mathbf{x})\n^{i-1}J+(\mathbf{x}) LS=(\mathbf{x})\n^{i-1}J,
\end{split}
\end{align}
where the second line  is due to modular law and we use  Lemma
\ref{zero} in the third line. Let now assume that $I$ is a Koszul
ideal. We want to show that $J$ is a Koszul ideal. For this by
\cite[Theorem 2.13(c)]{IT}, it is enough to show that
$J/(\mathbf{x})J$  is a Koszul module over $S$. Using the equality
$(\mathbf{x})\cap J=(\mathbf{x})J$, we get an exact sequence
$$0\rightarrow (\mathbf{x})\rightarrow I\rightarrow
J/(\mathbf{x})J\rightarrow 0.$$ Since $(\mathbf{x})$ is Koszul
applying \cite[Theorem 3.1]{Hop}, $J/(\mathbf{x})J$ is Koszul if
and only if $(\mathbf{x})\cap \n^iI=(\mathbf{x})\n^i$ for all $i>0$.
Let $i>0$. Using modular law we get the first equality below
$$(\mathbf{x})\cap
\n^iI=(\mathbf{x})\n^i+(\mathbf{x})\cap\n^iJ=(\mathbf{x})\n^i+(\mathbf{x})\n^{i-1}J=(\mathbf{x})\n^i,$$
where the second equality is obtained by applying (\ref{valla}).
\end{proof}
\begin{thm}\label{field}
Let $S$ be a polynomial ring over a field or a regular local ring containing a field. Then any proper  Koszul ideal of $S$ is Golod.
\end{thm}
\begin{proof}
First note that a local ring $R$ is Golod if and only if $\widehat{R}$, the completion of $R$ with respect to its maximal ideal, is Golod.
Also, an $R$-module $M$ is Koszul if and only if its completion $\widehat{M}$ is Koszul over $\widehat{R}$.
Therefore in the local case we can assume that $S$ is a complete regular local ring containing a field.
Then by Cohen structure theorem $S$ is a power series ring over a field. Let now $\n$ be the maximal ideal of $S$.
Assume that $I\subseteq \n$ is a Koszul ideal. If $I\subseteq\n^2$ we are done by \ref{Koszulideal}.
Otherwise by Lemma \ref{subKoszul}, there is a sequence $\mathbf{x}=x_1,\ldots,x_r$ with
$x_i\in\n\setminus\n^2$ and a Koszul ideal $J\subseteq \n^2$ such that  $I=(\mathbf{x})+J$.
Moreover $\mathbf{x}$ is a regular sequence  on $Q=S/J$. Note that $Q$ is a Golod ring by \ref{Koszulideal}. Let  $R=S/I$ and so we have $R\cong Q/\mathbf{x}Q$.  Now we can apply \cite[Proposition 5.2.4(3)]{Av} to conclude that $R$ is a Golod ring and hence by definition $I$ is a Golod ideal.

\end{proof}

\begin{prop}\label{product}
Let $I$ be an ideal of $S$ such that  $v_i(I)=0$ for all $i$. Assume
that  $L$ is a proper ideal of $S$ so that $I^2\subseteq L \subseteq
\n I$. Then $L$ is Golod. In particular, if $J$ is a proper ideal of
$S$ such that $I\subseteq J$, then $I J$ is Golod.
\end{prop}
\begin{proof}
 We have the inclusions $I^2\subseteq L\subseteq I$. Thus applying Lemma \ref{cycle}, it suffices to show that
the   map
$$\Tor^S_i(L,k)\rightarrow \Tor^S_i(I,k),$$ induced by the inclusion $L\subseteq I$, is zero for all
$i\geq0$. To this end we notice that this map factors through the
map $\upsilon^S_i(I)$ which is zero by assumption for all $i\geq0$.
Hence the claim is clear. The last assertion is clear from the first, since by the assumption  the inclusion $I^2\subseteq IJ\subseteq \n I$ holds.
\end{proof}

If $S$ is a polynomial ring over a field of characteristic zero and $I$ is a graded ideal of $S$,
Herzog and Huneke  showed that  if $I$ is strongly Golod, then  $IJ$ is (strongly) Golod for any ideal $J$ contains $I$, see \cite[Theorem 2.3(e)]{HH}. This result does not imply Corollary \ref{product} in the case where $S$ is a polynomial ring of characteristic zero, because (graded) Koszul ideals need not to be  strongly Golod
as the following simple example shows: the ideal $(xy)$ of the polynomial ring $k[x,y]$ has a linear resolution and is not strongly Golod.
 We also remark that in Corollary \ref{product} the
condition $I\subseteq J$ is necessary: let $k$ be a field and
$S=k[x,y,z,w]$ a polynomial ring. Then the graded maximal ideal
$I=(x,y,z,w)$ is obviously Koszul and  if we choose
$J=(x^2,y^2,z^2,w^2)$, then $IJ$ is not Golod, see
\cite[Example 2.1]{St}.\\

There is another characterization of Koszul modules  due to \c{S}ega
(see \cite[Theorem 2.2 (c)]{S}):
 let $(R,\m,k)$ be  a local ring ( or a standard graded
$k$-algebra). An $R$-module $M$ is Koszul if and only if the natural
map
$$\delta_{i}^j(M):\Tor^R_i(M,R/\m^{j+1})\rightarrow \Tor^R_i(M,R/\m^j)$$ is zero for all
$i>0$ and all $j\geq 0$. \\
\indent   The condition $\delta_{i}^1(M)=0$ always implies that
$\upsilon^R _i(M)=0$. In the following lemma we will prove this when
$R$ is a regular ring and $M$ is an ideal. A similar argument works
without these assumptions. Unfortunately, we do not know  whether
the converse holds.

\begin{lem}\label{Sega-char}
Let   $I\subseteq J$ be  proper  ideals of $S$. Assume that
$$\delta_{i}^1(I):\Tor^S_i(I,S/\n^2)\rightarrow \Tor^S_i(I,S/\n),$$  is
zero for all $i>0$. Then the map
$$\alpha_i:\Tor^S_i(IJ,S/\n)\rightarrow \Tor^S_i(I,S/\n),$$ induced
by the inclusion $IJ\subseteq I$, is zero for all $i\geq 0$. In
particular, $\upsilon^S_i(I)=0$ for all $i\geq 0$.
\end{lem}

\begin{proof}
Clearly $\alpha_0=0$. Let $i>0$. Using the exact sequence
$0\rightarrow\n/\n^2\rightarrow S/\n^2\rightarrow S/\n\rightarrow0$,
we get the following commutative diagram.
\begin{equation}
\xymatrix{
&\Tor^S_{i}(IJ,S/\n)\ar[d]^{\alpha_{i}}\ar[r]& \Tor^S_{i-1}(IJ,\n/\n^2)\ar[d]^{\beta_{i-1}}\\
\Tor^S_{i}(I,S/\n^2)\ar[r]^{\delta^{1}_i}&\Tor^S_{i}(I,S/\n)\ar[r]^{\gamma_i}&
\Tor^S_{i-1}(I,\n/\n^2)}
\end{equation}
 Note that  $\gamma_i$ is injective since $\delta^{1}_i=0$, by the
hypothesis. This implies that $\alpha_i=0$ if $\beta_{i-1}=0$. On
the other hand $\beta_j$ is a direct sum of $\alpha_j$ for all
$j\geq 0$. Since $\alpha_0=0$ we have $\beta_0=0$ and
 consequently $\beta_1=\alpha_1=0$.
Therefore using induction on $i$, one obtains $\alpha_i=0$. In
particular, applying the result for $J=\n$, we get $v_i^S(I)=0$.
\end{proof}
The following is an immediate consequence of the above lemma and
Theorem \ref{Ko-Golod}.
\begin{cor}
Assume that $I\subseteq \n^2$ and
$$\delta_{i}^1(I):\Tor^S_i(I,S/\n^2)\rightarrow \Tor^S_i(I,S/\n),$$  is
zero for all $i>0$. Then $I$ is Golod.
\end{cor}

 \vskip 1 cm

\textbf{Acknowledgments}\\
The author is grateful to J\"{u}rgen Herzog, for reading the first
version of this manuscript and making  comments that improved the contents. He also wish to thank the referee  for his/her valuable suggestions
that improved  greatly the content and the presentation.
\vskip 1 cm


\begin{thebibliography}{2}



\bibitem{Av}L.L. Avramov, Infinite free resolutions, in: Six Lectures on Commutative Algebra, Bellaterra, 1996, in: Progr.
Math., vol. 166, Birkh\"{a}user, Basel, 1998, pp. 1--118.
\bibitem{Av2} L. L. Avramov, Small homomorphisms of local rings, J. Algebra 50 (1978)
400--453.
\bibitem{Av-K} L. L. Avramov, A. R Kustin, M. Miller,  Poincar\'{e} series of modules over local rings of small
embedding codepth or small linking number, J. Algebra 118 (1988),
162--204.

\bibitem {AF}J. Backelin and R. Fr\"{o}berg, Koszul algebras, Veronese subrings, and
rings with linear resolutions, Rev. Roumaine Math. Pures. Appl. 30
(1985) 85–-97.

\bibitem {BH} W. Bruns, J. Herzog, Cohen-Macaulay rings. Cambridge Studies in Advanced Mathematics, 39. Cambridge University Press, Cambridge, 1993.

\bibitem{CE}H. Cartan, S. Eilenberg, Homological Algebra, Princeton Univ. Press, Princeton, NJ, 1956.

\bibitem{E} D. Eisenbud, Commutative Algebra with a View Toward Algebraic Geometry, Grad. Text
in Math. 150, Springer, New York, 1995.

\bibitem{G} E.S. Golod, On the homology of some local rings, Soviet Math. Dokl. 3 (1962) 745–-748.

\bibitem{Gv} E. H. Gover, Multiplicative structure of generalized Koszul Complex, Trans. Amer.
Math. Sot. 185 (1973) 287--307.

\bibitem{GU} T. H. Gulliksen,  Massey operations and the Poincar´e series of certain local rings,
J. Algebra 22 (1972), 223–-232.

\bibitem{H}  J. Herzog, Canonical Koszul cycles, Aportaciones Mat. Notas de Investigaci´on 6 (1992) 33-–41.

\bibitem{HA}  J. Herzog, R. Ahangari Maleki, Koszul cycles and Golod rings, to appeaer in  manuscripta math, arXiv:1701.06738.

\bibitem{HH} J. Herzog, C. Huneke, Ordinary and symbolic powers
are Golod, Adv. Math. 246 (2013) 89-99

\bibitem{HI} J. Herzog, S. Iyengar,  Koszul modules, J. Pure Appl. Algebra 201 (2005) 154--188.

\bibitem{HRW} J. Herzog, V. Reiner and V. Welker, Componentwise linear ideals and Golod
rings, Michigan Math. J. 46 (1999) 211-–223.

\bibitem{HWY} J. Herzog, V. Welker and S. Yassemi, Homology of powers of ideals: Artin-Rees numbers of syzygies and the
Golod property. Algebra Colloq. 23 (2016) no. 4, 689–-700.

\bibitem{IT} S. Iyengar, T.R\"{o}mer, Linearity defects of modules over commutative rings, J. Algebra 322 (2009) 3212--3237.

\bibitem{K} L. Katth\"{a}n, A non-Golod ring with a trivial product on its Koszul homology, J. Algebra 479 (2017) 244–-262..


\bibitem{Hop} H.D. Nguyen, Notes on the linearity defect and applications. Illinois J. Math. 59 (2015), no. 3, 637–-662.

\bibitem{RSY} H. Rahmati, J.  Striuli, Y. Yang, Poincar\'{e} series of fiber products and weak complete
intersection ideals, J. Algebra, 498 (2018) 129–-152.

\bibitem{T} T. R\"{o}mer, On minimal graded free resolutions, Dissertation, Essen, 2001.

\bibitem{RV}M.E. Rossi, G. Valla, Hilbert functions of filtered modules, Lecture Notes of the Unione Matematica Italiana, 9. Springer-Verlag, Berlin; UMI, Bologna, 2010.

\bibitem{RS} M.E. Rossi,  L.M. Sega,  Poincaré series of modules over compressed Gorenstein local rings, Adv. Math.,
259 ( 2014) 421–-447,.
\bibitem{Rot} J. J. Rotman, An introduction to homological algebra. Second edition. Universitext. Springer, New York, 2009.

\bibitem{S} L M. \c{S}ega,  On the linearity defect of the residue field. J. Algebra 384 (2013) 276-–290.

\bibitem{Se} L M. \c{S}ega, Homological properties of powers of the maximal ideal of a local ring, J. Algebra 241 (2001) 827–-858.

\bibitem{Ser}  J.-P. Serre, Alg\`{e}bre locale. Multiplicit\'{e}s, Lecture Notes Math. 11 Springer, Berlin, (1965).
\bibitem {St} A. De Stefani,  Products of ideals may not be Golod,  J. Pure Appl.
Algebra 220 (2016) 2289–-2306.

\bibitem{SH} I. Swanson, C. Huneke, Integral Closure of Ideals, Rings, and Modules. Cambridge University
Press (2006).


\end{thebibliography}
\end{document}